\newtheorem{Theor}{Theorem}[section]
\newtheorem{rmk}{Remark}[section]
\newcommand{\R}{\mathbb R}
\begin{document}

\title[]{On the Continuity of Center-Outward Distribution\\ and Quantile Functions}
\author[A. Figalli]{Alessio Figalli}
\address{ETH Z\"urich, Dept. Mathematics, R\"amistrasse 101, 8092 Z\"urich, Switzerland.}
\email{alessio.figalli@math.ethz.ch} 
 

\begin{abstract} 
To generalize the notion of distribution function to dimension $d\geq 2$, in the recent papers \cite{CGHH17,Hal17} the authors propose a concept  of {\it center-outward distribution function} based on optimal transportation ideas,
and study the inferential properties of the
corresponding {\it center-outward quantile function}. A crucial tool needed in \cite{Hal17} to derive the desired inferential properties is the continuity and invertibility for the 
center-outward quantile function outside the origin, as this ensures the existence of closed and nested {\it quantile contours}. The aim of this paper is to prove such a 
continuity and invertibility result.
 \end{abstract}

 \dedicatory{To Carlo Sbordone, for his 70th birthday.}

\maketitle

 
 \noindent {\bf AMS 1980 subject classification}: 62G35, 35J96.

 \noindent {\bf Keywords}: Measure transportation; multivariate distribution function; multivariate quantiles; gradient of convex functions.\\


\section{Introduction}\label{introintro}
\setcounter{equation}
{0}

Starting with dimension $d=2$, the traditional definition of a distribution function, based on marginal orderings, is unsatisfactory on many counts. Indeed, the ranks induced by its empirical counterpart do not enjoy the properties that make traditional (univariate) ranks a successful tool of inference, while the corresponding quantile function---in the Lebesgue-absolutely continuous case, the inverse of the distribution function---does not exhibit the equivariance behaviour one is expecting from a quantile; see~\cite{CR01,Hal17}.
 This fact, which results from the absence of a canonical ordering of $\mathbb{R}^d$, has been recognized long ago, and a number of   ingenious alternative definitions---all of them reducing, for dimension $d=1$, to the traditional univariate definition---have been considered in the statistical literature. None of them, however, is preserving the inferential properties of their univariate counterparts; see~\cite{Hal17} for a survey.  
 
Motivated by this lack of a statistically sound definition, first in \cite{CGHH17} and then in 
\cite{Hal17}, the authors proposed a new concept  of {\it center-outward distribution function} based on optimal transportation ideas. The starting point is the fact that, denoting by $F$ the traditional distribution function associated with an absolutely continuous distribution $\mathrm{P}$ on the real line (namely, $F(z)=\mathrm{P}((-\infty, z],\ z\in\mathbb{R}$), then $2F-1$ is {\it pushing $\mathrm{P}$ forward} to the uniform distribution $\mathrm{U}_1$ over $[-1,1]$, that one can interpret as the unit ball in $\mathbb{R}$. As the map $2F-1$ is monotone increasing, a classical result in optimal transportation theory \cite{McCann1995} implies that this map is the unique gradient of a convex function mapping $\mathrm{P}$ onto $\mathrm{U}_1$. Note that, whereas $F(z)=\mathrm{P}((-\infty, z])$ yields the probability of nested halflines of the form $(-\infty, z]$, the map $2F-1$ is related to intervals of the form $[z^-,  z^+]$ with $F(z^-)+F(z^+)=1$, whence the terminology {\it center-outward distribution function}. 

The definition of a center-outward distribution function as the unique gradient of function (denoted as ${\bf F}_{\tiny{\pm}}$) pushing $\mathrm{P}$ to the uniform measure over the unit ball readily extends to absolutely continuous distributions over $\mathbb{R}^d$; here, with the name  {\it uniform measure over the unit ball}, we mean the measure $\mathrm{U}_d$ obtained by considering the product of the uniform measure over the unit sphere and the uniform over the  unit interval $[0,1]$. In other words, by the change of variable fomula, 
\begin{equation}
\label{eq:Ud}
\mathrm{U}_d=\mathrm{u}_d(x)dx\qquad \text{with $\mathrm{u}_d(x)=\frac{c_d}{|x|^{d-1}}\mathbf{1}_{B_1}(x)$,} 
\end{equation}
 where $c_d=1/\mathcal H^{d-1}(\mathbb S^{d-1})$ is a dimensional normalizing constant (here $\mathcal H^{d-1}(\mathbb S^{d-1})$ denotes the area of the $(d-1)$-dimensional unit sphere).
 The corresponding {\it center-outward quantile function} is then defined as  the inverse ${\bf Q}_{\tiny{\pm}}:={\bf F}_{\tiny{\pm}}^{-1}$. The properties of ${\bf F}_{\tiny{\pm}}$ have been studied, under the assumption of $\mathrm{P}$ being compactly supported, in \cite{CGHH17}; such assumption has then been relaxed in \cite{Hal17}, where it is shown that  ${\bf F}_{\tiny{\pm}}$ and ${\bf Q}_{\tiny{\pm}}$ (and their empirical counterparts), contrary to all previous concepts that have been proposed in the literature, do enjoy the  inferential properties  expected from distribution and quantile functions in $\mathbb{R}^d$. We refer to \cite{Hal17} for more details.

It is important to observe that, in order to derive these inferential properties, a fundamental fact needed in \cite{Hal17} is the fact that~${\bf Q}_{\tiny{\pm}}$ is a homeomorphism from $B_1\setminus\{0\}$ onto its image. Indeed, this ensures the existence of closed and nested {\it quantile contours}, obtained as the images under ${\bf Q}_{\tiny{\pm}}$ of the nested hyperspheres $\{\partial B_r\}_{0<r<1}$. The objective of this paper is to prove this continuity property needed in \cite{Hal17}.
We note that, although several fundamental results have been obtained in the last 25 years on the regularity of optimal transport maps (see \cite{dPF2,figalliBook} for a survey), the proof of the above-mentioned property is rather delicate, due to the fact that the density of $\mathrm{U}_d$ is singular at the origin whenever $d\geq 2$.

We recall that, given two absolutely continuous probability densities on $\R^d$, there exists a unique transport map that pushes forward one density onto the other and which coincides almost everywhere with the gradient of a convex function (see \cite{McCann1995}).
We shall refer to this map as the {\it optimal transport map}, being implicit that this is the optimal transport map for the quadratic Euclidean cost (see \cite{dPF2} for more details).

Here is our main result.\footnote{
Here and in the sequel, $|E|$ stands for the Lebesgue measure of a Borel set $E$. Also, given $k\geq 0$ and $\alpha \in (0,1)$, we say that a function $f$ belongs to $C^{k,\alpha}_{\rm loc}(\R^d)$ if $f\in C^k(\R^d)$ and its $k$-th derivative is locally $\alpha$-H\"older continuous, namely
$$
\forall\,R>0,\qquad \sup_{x\neq y,\,x,y \in B_R}\frac{|D^kf(x)-D^kf(y)|}{|x-y|^\alpha}<\infty.
$$}

\begin{Theor}\label{maintheo}
Let $\mathrm U_d$ be the uniform measure on $B_1$ (see \eqref{eq:Ud}), and let
$\mathrm{P}=\mathrm{p}(y)dy$ be a probability measure on $\R^d$
satisfying  
$0<\lambda_R\leq \mathrm{p}\leq \Lambda_R$ inside $B_R$ for all $R<\infty$.
Let ${\bf Q}_{\tiny{\pm}}=\nabla \varphi:B_1\to \R^d$ be the unique optimal transport map from $\mathrm U_d$ to $\mathrm{P}$. Then ${\bf Q}_{\tiny{\pm}}$ is a homeomorphism
from $B_1\setminus \{0\}$ onto $\R^d\setminus K$, where $K$ is a compact convex set of Lebesgue measure zero. 

In addition:
\begin{itemize}
\item[(a)] If $\mathrm p\in C^{k,\alpha}_{\rm loc}(\R^d)$ for some $k\geq 0$ and $\alpha \in (0,1)$, then ${\bf Q}_{\tiny{\pm}}:B_1\setminus \{0\}\to \R^d\setminus K$ is a diffeomophism of class $C^{k+1,\alpha}_{\rm loc}$ inside $B_1\setminus\{0\}$, and
\begin{equation}
\label{eq:MA}
{\rm det}\bigl(\nabla {\bf Q}_{\tiny{\pm}}(x)\bigr)=\frac{\mathrm u_d(x)}{\mathrm p\bigl({\bf Q}_{\tiny{\pm}}(x)\bigr)}\qquad \forall\,x \in B_1\setminus\{0\}.
\end{equation}
\item[(b)] If $\mathrm p$ is locally analytic, then ${\bf Q}_{\tiny{\pm}}:B_1\setminus \{0\}\to \R^d\setminus K$ is locally an analytic map.
\item[(c)] If $d=2$ then $K=\{{\bf Q}_{\tiny{\pm}}(0)\}$ and ${\bf Q}_{\tiny{\pm}}$ is a homeomorphism
from $B_1$ onto $\R^2$.
\end{itemize}

\end{Theor}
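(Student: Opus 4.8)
The plan is to localize: away from the origin $\mathrm u_d$ is comparable to a positive constant and one can run the classical regularity theory, while the singularity of $\mathrm u_d$ at $0$ is what produces the set $K$. First I would pass to the Legendre transform $\varphi^*$ of $\varphi$. Since $\mathrm p\ge\lambda_R>0$ forces $\varphi^*$ to be finite and coercive on $\R^d$, the set $K:=\partial\varphi(0)=\{\varphi^*=\min\varphi^*\}$ is nonempty, compact and convex; and from $\nabla\varphi^*{}_\#\mathrm P=\mathrm U_d$ together with $\mathrm U_d(\{0\})=0$ one gets $\mathrm P(K)=0$, hence $|K|=0$. Thus $K$ is exactly the ``blow-up'' of ${\bf Q}_{\tiny{\pm}}$ at the singular point.

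For the homeomorphism statement I would argue as follows. On $B_1\setminus\{0\}$ the density $\mathrm u_d$ is locally bounded between positive constants and the target measure $\mathrm P$ is supported on the convex set $\R^d$; Caffarelli's theory --- applied locally, since the densities are controlled only on compact sets --- gives that $\varphi$ is strictly convex and $C^{1,\beta}_{\rm loc}$ on $B_1\setminus\{0\}$, so ${\bf Q}_{\tiny{\pm}}=\nabla\varphi$ is continuous and injective there; moreover strict convexity forces ${\bf Q}_{\tiny{\pm}}$ to avoid $K=\partial\varphi(0)$, since $\nabla\varphi(x)\in\partial\varphi(0)$ for some $x\ne0$ would make $\varphi$ affine on $[0,x]$. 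Dually, on $\R^d\setminus K$ one has $\lambda_R\le\mathrm p\le\Lambda_R$, the convex target $B_1$, and $\mathrm u_d$ bounded below and --- evaluated along $\nabla\varphi^*$, which there stays away from $0$ --- locally bounded above, so $\varphi^*$ is strictly convex and $C^{1,\beta}_{\rm loc}$ on $\R^d\setminus K$. Now $\nabla\varphi^*$ maps $\R^d\setminus K$ continuously and injectively into $\overline{B_1}$; by invariance of domain its image is open, an open subset of $\overline{B_1}$ not containing $0$ is contained in $B_1\setminus\{0\}$, and one checks that $\nabla\varphi$ and $\nabla\varphi^*$ are mutually inverse on $B_1\setminus\{0\}$ and $\R^d\setminus K$. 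This gives the desired homeomorphism.

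For (a) and (b): on $B_1\setminus\{0\}$ the function $\varphi$ solves $\det D^2\varphi=\mathrm u_d/(\mathrm p\circ{\bf Q}_{\tiny{\pm}})$ in the Alexandrov sense, and by the previous step together with the smoothness (resp. analyticity) of $\mathrm u_d$ away from $0$, the right-hand side is locally H\"older as soon as $\mathrm p$ is; Caffarelli's $C^{2,\alpha}$ estimate followed by the Schauder (resp. analytic) bootstrap for the Monge--Amp\`ere operator then upgrade $\varphi$ to $C^{k+2,\alpha}_{\rm loc}$ (resp. to analytic) on $B_1\setminus\{0\}$. Combined with strict convexity this yields $D^2\varphi>0$ there, hence ${\bf Q}_{\tiny{\pm}}$ is a diffeomorphism of the stated class and \eqref{eq:MA} holds pointwise.

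For (c), when $d=2$ a compact convex set of measure zero is either a point or a segment, and it remains to rule out a nondegenerate segment $K=[A,B]$. Here $\varphi^*$ is constant on $K$ and satisfies the degenerate equation $\det D^2\varphi^*=\mathrm p\,|\nabla\varphi^*|/c_2$ near $K$; at a relative-interior point $p$ of the segment the subdifferential $\partial\varphi^*(p)$ is orthogonal to the segment $K$ (since $\varphi^*$ is affine along $K$) and, by the ``${\bf Q}_{\tiny{\pm}}$ avoids $K$'' property, meets $B_1\setminus\{0\}$ nowhere, so it is a convex subset of a line intersected with $\{0\}\cup\partial B_1$, i.e. a single point; hence $\varphi^*$ is differentiable along the relative interior of $K$, and a blow-up/comparison analysis of $\varphi$ at $0$ --- exploiting that $|x|^{-1}$ is the critical scaling of $\mathrm u_d$ in dimension two --- then produces a contradiction with $|A-B|>0$. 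Once $K=\{{\bf Q}_{\tiny{\pm}}(0)\}$ the earlier arguments apply at $0$ as well, giving differentiability of $\varphi$ at $0$ and the extension of ${\bf Q}_{\tiny{\pm}}$ to a homeomorphism of $B_1$ onto $\R^2$. The heart of the matter --- and the step I expect to be hardest --- is precisely this analysis at the singular point: showing that the sole failure of regularity/invertibility is the collapse onto $K$, and, in dimension two, that this collapse reduces to a single point, since the standard interior theory provides no information at $0$, where $\mathrm u_d$ is infinite.
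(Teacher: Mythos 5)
The overall skeleton of your proposal---passing to the Legendre transform $\varphi^*$, setting $K=\partial\varphi(0)$, establishing strict convexity and $C^{1,\alpha}$ regularity of $\varphi^*$ away from $K$, and concluding via invariance of domain---matches the paper's. However, there is a genuine gap at the central step. You assert that because the Monge--Amp\`ere measure of $\varphi^*$ is locally pinched between positive constants on $\R^d\setminus K$, ``Caffarelli's theory applied locally'' yields strict convexity of $\varphi^*$ there. That is not a theorem. Local two-sided bounds on the Monge--Amp\`ere measure only give, via Caffarelli's localization result (\cite[Theorem 4.10]{figalliBook}), that a non-trivial contact set $\Sigma=\{\varphi^*=\ell\}$ has no \emph{exposed points} inside $\R^d\setminus K$; this leaves open two possibilities that no local argument can rule out, namely that $\Sigma$ contains a full line, or that $\Sigma$ contains a half-line emanating from $K$ and going to infinity. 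The first case is easy (a line in the contact set forces $\partial\varphi^*(\R^d)$ into a hyperplane, incompatible with $(\nabla\varphi^*)_\#\mathrm P=\mathrm U_d$, whose support is the whole ball). The second case is the heart of the paper's proof of the main statement and requires a separate, non-local argument: one first shows that the supporting slope $\hat q$ must lie on $\partial B_1$, and then compares the volume of a thin cone $\mathscr C_\theta$ around the half-line with the volume of the set $\mathscr D_\theta$ near $\hat q$ where $\nabla\varphi^*$ is forced to land, obtaining the contradiction $\theta^{d-1}\lesssim\theta^{d+1}$ as $\theta\to 0$. Without ruling out this half-line scenario, the strict convexity of $\varphi^*$---and with it the homeomorphism claim and parts (a), (b)---remains unproved. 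The same objection applies to your parallel assertion that $\varphi$ is strictly convex and $C^{1,\beta}$ on $B_1\setminus\{0\}$ ``by Caffarelli's theory applied locally''.

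Your sketch for part (c) is also substantially underdeveloped. The observations that $K$ would have to be a segment and that $\partial\varphi^*(p)$ is a singleton at a relative-interior point $p$ of $K$ are fine, but ``a blow-up/comparison analysis of $\varphi$ at $0$ \dots produces a contradiction'' names no mechanism. The paper's argument here is genuinely delicate: it uses a quantitative subdifferential estimate from \cite{FL} on thin rectangles $\mathcal R_\delta$, balances mass against the logarithmic behaviour of $\int_{\mathcal R_\delta}\mathrm u_2$ to force the exponential bound $\varphi(0,\delta)\le\delta e^{-c/\delta}$, converts this into the lower bound $\varphi^*(y_1,y_2)\gtrsim |y_2|/\bigl|\log|y_2|\bigr|$ near $K$, and then closes either by invoking Mooney's counterexample classification or by a dyadic telescoping estimate in the spirit of Caffarelli. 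None of these ingredients is visible in your sketch, so part (c) is essentially asserted rather than proved.
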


\begin{rmk}
{\rm
When $\mathrm P=\mathrm p(|y|)dy$ has a radial density, then also the map ${\bf Q}_{\tiny{\pm}}$ is radial (this follows from the uniqueness of the optimal transport map) and the above result is elementary. Indeed, in this case one can explicitly write the optimal map in terms of distribution function, 
and the explicit formula is given by
$$
{\bf Q}_{\tiny{\pm}}(x)={\bf q}_{\tiny{\pm}}(|x|)\frac{x}{|x|},
$$
where ${\bf q}_{\tiny{\pm}}:[0,1]\to [0,\infty)$ is implicitly defined via the identity
$$
s=\mathcal H^{d-1}(\mathbb S^{d-1})\int_{0}^{{\bf q}_{\tiny{\pm}}(s)}r^{d-1}\mathrm{p}(r)\,dr\qquad \forall\,s \in (0,1).
$$
Hence, in this very particular case,
 the conclusions of Theorem \ref{maintheo} hold with $K=\{0\}$, as can easily be checked by direct computations.
}
\end{rmk}

\begin{rmk}
{\rm
As shown in point (c) of Theorem \ref{maintheo}, in the case $d=2$ the map 
${\bf Q}_{\tiny{\pm}}$ is a homeomorphism up to the origin. It is a well-known fact that the Monge-Amp\`ere equation behaves better in dimension two than in higher dimensions (see for instance \cite[Sections 2.5 and 3.2]{figalliBook}), and we do not expect Theorem \ref{maintheo}(c) to be true in dimension $d\geq 3$.  However, finding a counterexample would not be relevant to the problem under investigation (namely, the existence of quantile contours as the images of the sphere $\{\partial B_r\}_{0<r<1}$ under  ${\bf Q}_{\tiny{\pm}}$), so we shall not investigate this question here.
}
\end{rmk}

\section{Proof of Theorem \ref{maintheo}}
To prove our main theorem, we first introduce some notation:
given a convex function $\psi:\R^d\to\R\cup\{+\infty\}$, the
\emph{Monge-Amp\`ere measure} $\mu_\psi$ associated to $\psi$ is defined by 
$$\mu_\psi(A) = |\partial \psi (A)|\qquad \text{for every Borel set $A\subset \R^n$,}$$ 
where
$$
\partial \psi(A):=\bigcup_{x \in A}\partial \psi(x)
$$
and $\partial \psi(x)$ denotes the subdifferential of $\psi$ at $x$, that is
$$
\partial \psi(x):=\bigl\{p\in \R^n\,:\,\psi(z)\geq \psi(x)+\langle p, z-x\rangle \quad \forall\,z \in \R^n\bigr\}.
$$
Note that $\partial\psi(x)$ is a convex set for any $x \in \R^d$. Also,
when $\psi$ is of class $C^2$, the Monge-Amp\`ere measure of $\psi$ is given by $\det (D^2 \psi)dx$ (see \cite[Example 2.2]{figalliBook}).

\begin{proof}[Proof of Theorem \ref{maintheo}]
First of all we note that, since the optimal map ${\bf Q}_{\tiny{\pm}}=\nabla \varphi$ is unique a.e. inside $B_1$, the function $\varphi$ is unique inside $B_1$ up to an additive constant. In particular, with no loss of generality we can set $\varphi(0)=0$.

Outside $B_1$ we simply extend $\varphi$ to be identically equal to $+\infty$ (note that this preserves the convexity of $\varphi$), and 
we consider the Legendre transform of $\varphi$, namely
\begin{equation}
\label{eq:Leg}
\varphi^*(y):=\sup_{x\in \R^n}\bigl\{\langle y, x\rangle -\varphi(x)\bigr\}=\sup_{x\in B_1}\bigl\{\langle y, x\rangle -\varphi(x)\bigr\}\qquad \forall\,y \in \R^d
\end{equation}
(the second equality follows from the fact that $\varphi=+\infty$ on $\R^n\setminus B_1$).
It is well known that $\nabla \varphi^*$ is the optimal transport map from $\mathrm{P}$ onto $\mathrm{U}_d$, and that $\nabla \varphi^*=(\nabla \varphi)^{-1}$ a.e.
(see for instance \cite[Section 6.2.3 and Remark 6.2.11]{AGS}).
In particular, 
\begin{equation}
\label{eq:1Lip}
|\nabla\varphi^*|\leq 1\qquad \text{a.e. in $\R^d$.}
\end{equation}
Note that, because $\mathrm{U}_d$ is supported in $B_1$ which is a convex set, it follows by \cite{caf2} (see also Step 1 in the proof of \cite[Theorem 4.23]{figalliBook}) that
$\varphi^*$ is an Alexandrov solution to the Monge-Amp\`ere equation inside $\R^d$, namely (recall \eqref{eq:Ud})
\begin{equation}
\label{eq:Alex}
\mu_{\varphi^*}(A) = \int_A \frac{\mathrm p(y)}{\mathrm{u}_d(\nabla \varphi^*(y))}\, dy=\frac{1}{c_d}\int_A \mathrm p(y)|\nabla \varphi^*(y)|^{d-1}\, dy\qquad \text{for all $A\subset \R^d$ Borel.}
\end{equation}
Let $K:=\partial \varphi(0)$, and observe that $K$ is a closed convex. Also, since $\varphi$ is locally Lipschitz in a neighborhood of the origin (being a finite convex function inside $B_1$), it follows that $K$ is bounded.

Consider an arbitrary compact set $\mathcal C\subset  \R^d\setminus K$.
First of all, we note that $\partial\varphi^*(\mathcal C)$ is a compact set
(see \cite[Lemma A.22]{figalliBook}).
Also, thanks to \eqref{eq:1Lip} it follows by \cite[Corollary A.27]{figalliBook} that 
\begin{equation}
\label{eq:subdiff B1}
\partial\varphi^*(\mathcal C)\subset\partial\varphi^*(\R^d)\subset \overline{B_1}.
\end{equation}
Furthermore, because $\partial \varphi$ and $\partial \varphi^*$ are inverse to each other  (see \cite[Equation (A.20)]{figalliBook}),
noticing that $\mathcal C\cap \partial\varphi(0)=\mathcal C\cap K=\emptyset$ we deduce 
 that $\partial \varphi^*(\mathcal C)\cap \{0\}=\emptyset$.

In conclusion, this proves that $\partial \varphi^*(\mathcal C)$ is a compact set
satisfying
$\partial \varphi^*(\mathcal C) \subset \overline{B_1}\setminus B_\rho$, for some $\rho>0$ depending on $\mathcal C$.
In particular, this implies that $\rho\leq |\nabla \varphi^*(y)|\leq 1$ for a.e. $y \in \mathcal C$.
Hence, recalling that $\mathrm p$ is locally bounded away from zero and infinity,
thanks to \eqref{eq:Alex} we obtain
\begin{equation}
\label{eq:Alex2}
m_{\mathcal C}|A|\leq \mu_{\varphi^*}(A) \leq M_{\mathcal C} |A|\qquad \text{for all $A\subset {\mathcal C}\subset \subset \R^d\setminus K$ Borel,}
\end{equation}
for some constants $0<m_{\mathcal C}\leq M_{\mathcal C}$.

In order to apply the regularity theory for the Monge-Amp\`ere equation from \cite[Chapter 4]{figalliBook}, we first need to prove that $\varphi^*$ is strictly convex inside $\R^d \setminus K$.
Assume this is false. Then there exists $\hat{y} \in \R^d \setminus K$ and $\hat{q} \in \partial \varphi^*(\hat{y}) \in \overline{B_1}$ such that, if we consider the affine function $\ell(z):=\varphi^*(\hat{y})+\langle \hat{q},z-\hat{y}\rangle$,
the convex set $\Sigma:=\{\varphi^*=\ell\}$ is not a singleton.

Notice that, thanks to \eqref{eq:Alex2},
 \cite[Theorem 4.10]{figalliBook} applies inside any compact subset of $\R^n\setminus K$, so
the convex set $\Sigma$ cannot have any exposed point in $\R^d\setminus K$. 
Hence, the only possibilities are the following:\\
(a) either $\Sigma$ contains an infinite half-line $L$ going from $K$ to infinity;\\
(b) or $\Sigma$ contains an infinite line.

In case (b), \cite[Lemma A.25]{figalliBook} yields that $\partial \varphi^*(\R^d)$ is
contained inside a hyperplane, contradicting the fact that $\nabla\varphi^*$ transports $\mathrm P$ onto the measure $\mathrm U_d$ which is supported on the whole unit ball.

We now need to exclude that case (a) occurs.
The argument in this case is inspired by \cite{FKM}.
With no loss of generality, up to a translation and rotation, we can assume that $0\in K$ and that $L=\{t e_1\,:\,t\geq 0\}$.
By the monotonicity of the subdifferential of convex functions it follows that, given two points $y_1$ and $y_2$,
\begin{equation}
\label{eq:monot}
\langle q_2-q_1,y_2-y_1\rangle\geq 0\qquad \forall\, q_i \in \partial \varphi^*(y_i),\,i=1,2.
\end{equation}
Since $\varphi^*=\ell$ on $L$, it follows that $\hat{q}=\nabla \ell \in \partial\varphi^*(q)$ for all $q \in L$. Hence, applying \eqref{eq:monot}
with $y_1=t e_1 \in L$, $q_1=\hat q$, and $y_2=y$ an arbitrary point in $\R^d$, we get
$$
\langle q-\hat{q},y-t e_1\rangle \geq 0\qquad \forall\,y \in \R^d,\, q \in \partial \varphi^*(y),\,t \geq 0.
$$
Letting $t\to +\infty$ in the above inequality we deduce that
$$
\langle q-\hat{q}, e_1\rangle \leq 0\qquad \forall\, y \in \R^d,\,q \in \partial \varphi^*(y).
$$
Thus, we proved that $\partial \varphi^*(\R^d)$ is contained inside the half-space
$$
H:=\{q\,:\,\langle q-\hat{q}, e_1\rangle\leq 0\}.
$$
Recalling \eqref{eq:1Lip},
this implies that $\nabla \varphi^*$ takes values inside $H\cap \overline{B_1}$ a.e. Since $(\nabla \varphi^*)_\#\mathrm P=\mathrm U_d$ and $\mathrm u_d$ is strictly positive inside $B_1$, it follows that $H\cap B_1=B_1$, which is possible if and only if $\hat q=e_1 \in \partial B_1$
(recall that $\hat q \in \overline{B_1}$, see \eqref{eq:subdiff B1}).

Let $\theta>0$ small, and consider the cone 
$$
\mathscr C_\theta:=\bigl\{y \in B_1\,:\,|y|\leq (1+\theta)\langle y, \hat q\rangle \bigr\}.
$$
Since $0 \in L$ we have $\hat q=\nabla \ell \in \partial \varphi^*(0)$, so applying 
\eqref{eq:monot} with $y_1=0$ and $q_1=\hat{q} \in \partial B_1$, we obtain
$$
\langle \nabla \varphi^*(y)-\hat{q}, y\rangle \geq 0\qquad \forall\,y \text{ where $\varphi^*$ is differentiable}.
$$
Combining this inequality with the definition of $\mathscr C_\theta$ and \eqref{eq:1Lip}, we deduce that
\begin{equation}
\label{eq:inclusion}
\nabla \varphi^*(y) \in \mathscr D_\theta:=
\bigl\{x \in B_1\,:\,\langle x-\hat{q}, \hat{q}\rangle \geq -C_d\theta |x-\hat{q}|\bigr \}\qquad \text{for a.e. }y \in \mathscr C_\theta,
\end{equation}
where $C_d>0$ is a dimensional constant
(cp. \cite[Figure 1]{FKM}).
Because $\mathrm p\geq \lambda_1$ inside $\mathscr C_\theta\subset B_1$ and $\mathrm u_d \leq 2c_d$ inside $\mathscr D_\theta$ for $\theta$ small enough,
it follows by the transport condition $(\nabla \varphi^*)_\#\mathrm P=\mathrm U_d$
and by  \eqref{eq:inclusion} that
$$
2c_d|\mathscr D_\theta|\geq \int_{\mathscr D_\theta}\mathrm u_d(x)\,dx=\int_{(\nabla \varphi^*)^{-1}(\mathscr D_\theta)}\mathrm p(y)\,dy\geq 
\int_{\mathscr C_\theta}\mathrm p(y)\,dy \geq \lambda_1 |\mathscr C_\theta|.
$$
Since $|\mathscr C_\theta|\sim \theta^{n-1}$ and $|\mathscr D_\theta|\sim\theta^{n+1}$, we obtain a contradiction for $\theta$ small enough.
This proves that also case (a) is impossible, thus $\varphi^*$ is strictly convex inside $\R^d\setminus K$.
\smallskip

Since $\varphi^*$ is a strictly convex Alexandrov solution of \eqref{eq:Alex2},
it follows by \cite{caf1,cafC1a,caf2} (see also \cite[Corollary 4.21]{figalliBook}) that $\varphi^*$ is of class $C^{1,\alpha}$ inside $\R^d\setminus K$.
In particular, $\nabla \varphi^*$ is continuous inside $\R^d\setminus K$.\footnote{Actually, since $\partial\varphi^*(K)=\{0\}$, it follows by the continuity of the subdifferential (see \cite[Equation (A.15)]{figalliBook}) that $\nabla\varphi^*$ is continuous on the whole space $\R^d$, with $\nabla\varphi^*(y)=0$ for all $y\in K$.}
Since, by the strict convexity of $\varphi^*$ inside $\R^d\setminus K$,  $\nabla \varphi^*$ is an injective continuous map from $\R^d\setminus K$ onto $B_1\setminus \{0\}$,
we deduce that $\nabla \varphi^*:\R^d\setminus K\to B_1\setminus \{0\}$ is a homeomorphism by the theorem on the invariance of domain.
Recalling that ${\bf Q}_{\tiny{\pm}}=\nabla \varphi=(\nabla \varphi^*)^{-1}$, we conclude that ${\bf Q}_{\tiny{\pm}}$ is a homeomorphism
from $B_1\setminus \{0\}$ onto $\R^d\setminus K$.

To prove that $K$ has Lebesgue measure zero it suffices to observe that $K=(\nabla \varphi^*)^{-1}(\{0\})$, so by the transport condition $(\nabla\varphi^*)_\#\mathrm P=\mathrm U_d$ we get
$$
\int_K \mathrm p(y)\,dy=\int_{\{0\}} \mathrm u_d(x)\,dx=0.
$$
Since $\mathrm p>0$ we conclude that $|K|=0$, as desired.
\\

We now prove the additional statements in the theorem.
\smallskip

$\bullet$ {\it Proof of (a).} We note that if $\mathrm p\in C^{k,\alpha}_{\rm loc}(\R^d)$ for some $k\geq 0$ and $\alpha \in (0,1)$, then \cite[Remark 4.25]{figalliBook}
implies that $\nabla \varphi^*$ is a $C^{k+1,\alpha}_{\rm loc}$ diffeomorphism from $\R^d\setminus K$ onto $B_1\setminus \{0\}$,
hence ${\bf Q}_{\tiny{\pm}}:B_1\setminus \{0\}\to \R^d\setminus K$ is a diffeomophism of class $C^{k+1,\alpha}_{\rm loc}$. 

Since ${\bf Q}_{\tiny{\pm}}|_{B_1\setminus \{0\}}$ is a $C^1$ diffeomorphim, the validity of \eqref{eq:MA} is classical, and we give here a short proof for completeness.
Since $\nabla \varphi^*$ is of class $C^2$ outside $K$, it follows by \cite[Example 2.2]{figalliBook} and \eqref{eq:Alex} that
$$
\int_A{\rm det}\bigl(D^2\varphi^*(y)\bigr)\,dy= \int_A \frac{\mathrm p(y)}{\mathrm{u}_d(\nabla \varphi^*(y))}\, dy\qquad \text{for all $A\subset (\R^d\setminus K)$ Borel.}
$$
By the arbitrariness of $A$, this yields
$$
{\rm det}\bigl(D^2\varphi^*(y)\bigr)=
\frac{\mathrm p(y)}{\mathrm{u}_d(\nabla \varphi^*(y))}\qquad \forall\,y \in \R^d\setminus K.
$$
Since ${\bf Q}_{\tiny{\pm}}=\nabla\varphi=(\nabla\varphi^*)^{-1}$,
for any $x \in (\nabla\varphi)^{-1}(\R^d\setminus K)=B_1\setminus\{0\}$
we obtain
$$
{\rm det}\bigl(\nabla {\bf Q}_{\tiny{\pm}}(x)\bigr)={\rm det}\bigl(D^2\varphi(x)\bigr)
=\frac{1}{{\rm det}\bigl(D^2\varphi^*(\nabla\varphi(x))\bigr)}=\frac{\mathrm{u}_d(x)}{\mathrm p(\nabla\varphi(x))}=\frac{\mathrm{u}_d(x)}{\mathrm p({\bf Q}_{\tiny{\pm}}(x))}.
$$
This proves \eqref{eq:MA}, concluding the proof of (a).

\smallskip

$\bullet$ {\it Proof of (b).} It follows by \cite[Proposition A.43]{figalliBook} that the Monge-Amp\`ere equation is uniformly elliptic on $C^2$ solutions. Hence, by the classical analytic regularity of solution to uniformly elliptic PDEs with analytic data \cite{morrey}, if  $\mathrm p$ is locally analytic then so is ${\bf Q}_{\tiny{\pm}}$.

\smallskip

$\bullet$ {\it Proof of (c).} 
We now focus on the case $d=2$. In this part we shall use coordinates $x=(x_1,x_2)$ and $y=(y_1,y_2)$ to denote points in $\R^2$.

Assume by contradiction that $K$ is not a point.
Since $K$ is a compact convex set of Lebesgue measure zero it must be a segment, say $K=[-1,1]\times \{0\}$.
With no loss of generality, we can assume that $\varphi(0)=0$.
Recalling that $K=\partial\varphi(0)$, this implies that
\begin{equation}
\label{eq:phi mod x}
\varphi(x_1,x_2)\geq |x_1|\qquad \forall\,x=(x_1,x_2)\in \R^2.
\end{equation}
Also, since $\varphi:B_1\to \R$ is convex, there exists a constant $R>0$ such that
\begin{equation}
\label{eq:phi Lip}
|\nabla\varphi(x)|\leq R\qquad \forall\,x \in B_{1/2}.
\end{equation}
Now, given $\delta\in (0,1/4]$, we define
$$
h_\delta:=\varphi(0,\delta),\qquad 
\ell_\delta:=\frac{h_\delta}{\delta}.
$$
Note that, because $\varphi$ is differentiable in the $x_2$ variable, $\ell_\delta\to 0$ as $\delta\to 0$. 

Given $R$ as in \eqref{eq:phi Lip}, we set
$$
\mathcal R_\delta:=[-h_\delta,h_\delta]\times [0,(1+R)\delta].
$$
With this definition, thanks to \eqref{eq:phi mod x} and \eqref{eq:phi Lip} we can apply \cite[Lemma 2.3]{FL} to deduce that
$$ \partial\varphi(\mathcal R_\delta)\supset [-1/2,1/2]\times \Bigl[0,\frac{\ell_\delta}{2(1+R)}\Bigr].
$$
Since $\nabla\varphi={\bf Q}_{\tiny{\pm}}$ is differentiable outside the origin and $\partial\varphi(0)=[-1,1]\times\{0\}$, this implies that
\begin{equation}
\label{eq:subdiff phi R}
\nabla \varphi(\mathcal R_\delta)\supset
[-1/2,1/2]\times \Bigl(0,\frac{\ell_\delta}{2(1+R)}\Bigr].
\end{equation}
Hence, by the transport condition $(\nabla \varphi)_\#\mathrm U_2=\mathrm P$ and because $\nabla \varphi(\mathcal R_\delta)\subset \nabla\varphi(B_{1/2})\subset B_R$ (see \eqref{eq:phi Lip}), we obtain
\begin{equation}
\label{eq:transport R}
\int_{\mathcal R_\delta}\mathrm u_2(x)\,dx\geq \int_{-1/2}^{1/2}\biggl(\int_0^{\frac{\ell_\delta}{2(1+R)}} \mathrm p(y)\,dy_2\biggr)\,dy_1\geq \frac{\lambda_R\ell_\delta}{2(1+R)},
\end{equation}
where we used that $\mathrm p\geq \lambda_R$ inside $B_R$. Noticing that
$$
\mathrm u_2(x_1,x_2)=\frac{1}{2\pi}\frac{1}{\sqrt{x_1^2+x_2^2}},
$$
it follows that (recall that $\frac{h_\delta}{\delta}=\ell_\delta \to 0$ as $\delta \to 0$)
\begin{align*}
\int_{\mathcal R_\delta}\mathrm u_2(x)\,dx &=
\frac{1}{2\pi}\int_{-h_\delta}^{h_\delta}d x_1\int_0^{(1+R)\delta}\frac{dx_2}{\sqrt{x_1^2+x_2^2}}=
\frac{1}{2\pi}\int_{-h_\delta}^{h_\delta}dx_1\int_0^{(1+R)\frac{\delta}{|x_1|}}\frac{ds}{\sqrt{1+s^2}}\\
&\leq C_{R}\int_{-h_\delta}^{h_\delta}\log\biggl(\frac{\delta}{|x_1|}\biggr)dx_1= 2 C_R h_\delta \bigl(|\log\ell_\delta|+1\bigr),
\end{align*}
for some constant $C_R$ depending on $R$.
Combining this bound with \eqref{eq:transport R}, we get 
$$
\ell_\delta\leq \hat C_R h_\delta |\log\ell_\delta|\quad \Rightarrow\quad \frac{1}{\delta}
\leq \hat C_R|\log\ell_\delta|=\hat C_R\left|\log\Bigl(\frac{h_\delta}{\delta}\Bigr)\right|.
$$
Recalling that $h_\delta=\varphi(0,\delta)=o(\delta)$, this proves that
$$
\varphi(0,\delta)\leq \delta e^{-c_R/\delta}\qquad \forall\, \delta \in [0,1/4],
$$
where $c_R:=1/\hat C_R$.
Analogously, repeating the above argument with $h_\delta=\varphi(0,-\delta)$ we 
obtain 
$\varphi(0,-\delta)\leq \delta e^{-c_R/\delta}$, therefore
$$
\varphi(0,x_2)\leq |x_2| e^{-c_R/|x_2|}\qquad \forall\,x_2\in [-1/4,1/4].
$$
By the definition of $\varphi^*$ (see \eqref{eq:Leg}), this implies that
\begin{equation}
\label{eq:varphi*}
\begin{split}
\varphi^*(y_1,y_2)&\geq \sup_{|x_2|\leq 1/4}\bigl\{x_2y_2-\varphi(0,x_2)\bigr\}\geq 
\sup_{|x_2|\leq 1/4}\bigl\{x_2y_2-|x_2| e^{-c_R/|x_2|}\bigr\}\\
&\geq c_R'
\frac{|y_2|}{\bigl|\log|y_2|\bigr|}\qquad \forall\,(y_1,y_2) \in \R \times[-1/2,1/2]
\end{split}
\end{equation}
for some constant $c_R'>0$.

At this moment one may conclude as follows: by \eqref{eq:Alex}, the Monge-Amp\`ere measure of $\varphi^*$ is bounded from above. So, thanks to \eqref{eq:varphi*}, we can apply \cite[Theorem 1.4]{mooney} to conclude that $K=[-1,1]\times\{0\}$ contains the infinite line $\R\times \{0\}$, thus providing the desired contradiction.
For completeness we provide here an alternative
self-contained proof, that we believe to have its own interest.

Consider the 
sets
$\mathcal U_k:=[-1/2,1/2]\times [2^{-(k+1)},2^{-k}]\subset \R^2$.
By the transport condition $(\nabla\varphi^*)_\#\mathrm P=\mathrm U_2$
one has (recall that $\mathrm p\geq \lambda_1$ inside $B_1$)
$$
{\rm det}\bigl(D^2\varphi^*(y)\bigr)=\frac{\mathrm p(y)}{\mathrm u_2(\nabla \varphi^*(y))}=2\pi \,\mathrm p(y)|\nabla \varphi^*(y)| \geq 2\pi \lambda_1
|\nabla \varphi^*(y)|\qquad \text{for a.e. $y \in \mathcal U_k$.}
$$
Hence, arguing as in \cite{caf-note}, it follows by the arithmetic-geometric inequality that\footnote{To rigorously justify the inequalities
$$
\int_{\mathcal U_k}\partial_{y_1y_1}\varphi^*(y)\,dy\leq \int_{\partial\mathcal U_k}\partial_{y_1}\varphi^*(y)\,\nu_1,\qquad\int_{\mathcal U_k}\partial_{y_2y_2}\varphi^*(y)\,dy\leq \int_{\partial\mathcal U_k}\partial_{y_2}\varphi^*(y)\,\nu_2,
$$
one can either use that any pointwise pure second derivative of a convex function is bounded from above by the corresponding distributional derivative, or prove the inequalities for smooth functions and then argue by approximation.}
\begin{align*}
2\cdot(2\pi \lambda_1)^{1/2}\int_{\mathcal U_k}|\nabla \varphi^*(y)|^{1/2}\,dy &
\leq 2\int_{\mathcal U_k}{\rm det}\bigl(D^2\varphi^*(y)\bigr)^{1/2}\,dy\\
&\leq \int_{\mathcal U_k}\Bigl(t\,\partial_{y_1y_1}\varphi^*(y)+\frac{1}{t}\,\partial_{y_2y_2}\varphi^*(y)\Bigr)\,dy\\
&\leq t\int_{\partial\mathcal U_k}\partial_{y_1}\varphi^*(y)\,\nu_1+\frac{1}{t}\int_{\partial\mathcal U_k}\partial_{y_2}\varphi^*(y)\,\nu_2\qquad\forall\,t >0,
\end{align*}
where $\nu=(\nu_1,\nu_2)$ is the outer unit normal to $\partial\mathcal U_k$.
Observe that, since $\varphi(0)=0$
and $\partial\varphi(0)=K$, it follows that
$\varphi^*\geq 0$ and $\varphi^*|_K=0$. Thus, since $K=[-1,1]\times \{0\}$ and $\varphi^*$ is 1-Lipschitz (see \eqref{eq:1Lip}), 
$$
0 \leq \varphi^*(y_1,y_2)\leq |y_2|\qquad \forall\,y_1 \in [-1,1],
$$
and \cite[Corollary A.23]{figalliBook}
applied to the convex function $\varphi^*(\cdot,y_2)$ yields
$$
|\partial_{y_1}\varphi^*(y_1,y_2)| \leq 2|y_2|\qquad \forall\,y_1 \in [-1/2,1/2].
$$
Thanks to this estimate, since $|y_2|\leq 2^{-k}$ on $\partial\mathcal U_k$ we can bound
$$
\int_{\partial\mathcal U_k}\partial_{y_1}\varphi^*(y)\,\nu_1 \leq 2\cdot 2^{-k}\int_{\partial\mathcal U_k}|\nu_1|=2^{-2k},
$$
thus
$$
2\cdot(2\pi \lambda_1)^{1/2}\int_{\mathcal U_k}|\nabla \varphi^*(y)|^{1/2}\,dy\leq t\,2^{-2k}+\frac{1}{t}\int_{\partial\mathcal U_k}\partial_{y_2}\varphi^*(y)\,\nu_2\qquad\forall\,t >0.
$$
Note that, by the monotonicity of the gradient of convex functions,
$$
\partial_{y_2}\varphi^*(y_1,2^{-k})\geq \partial_{y_2}\varphi^*(y_1,2^{-(k+1)})\qquad \forall\,y_1\quad \Rightarrow\quad \int_{\partial\mathcal U_k}\partial_{y_2}\varphi^*(y)\,\nu_2\geq 0.
$$
Thus, choosing $t:=2^{k}\left(\int_{\partial\mathcal U_k}\partial_{y_2}\varphi^*(y)\nu_2\right)^{1/2}$ we obtain
$$
(2\pi \lambda_1)^{1/2}\int_{\mathcal U_k}|\nabla \varphi^*(y)|^{1/2}\,dy\leq  2^{-k}\left(\int_{\partial\mathcal U_k}\partial_{y_2}\varphi^*(y)\,\nu_2\right)^{1/2},
$$
or equivalently
$$
\frac{\pi}{2} \lambda_1\left(2^{k+1}\int_{\mathcal U_k}|\nabla \varphi^*(y)|^{1/2}\,dy\right)^2\leq \int_{\partial\mathcal U_k}\partial_{y_2}\varphi^*(y)\,\nu_2.
$$
Summing over $k$ the above inequalities and noticing that the boundary integrals appearing in the right hand side form a telescopic series, recalling \eqref{eq:1Lip} we conclude that
\begin{equation}
\label{eq:sum k}
\frac{\pi}{2} \lambda_1\sum_{k\geq 1}
\left(2^{k+1}\int_{\mathcal U_k}|\nabla \varphi^*(y)|^{1/2}\,dy\right)^2 \leq \int_{-1/2}^{1/2}\partial_{y_2}\varphi^*(y_1,1/2)\,dy_1\leq 1.
\end{equation}
We now want to obtain a contradiction by showing that the series in the left hand side diverges.

To this aim notice that, by the convexity of $\varphi^*$ and because $\varphi^*|_K=0$, \eqref{eq:varphi*} implies
$$
|\nabla\varphi^*(y_1,y_2)|\geq \partial_{y_2}\varphi^*(y_1,y_2)\geq \frac{\varphi^*(y_1,y_2)}{y_2}\geq \frac{\hat c_R}{|\log y_2|}\geq \frac{\hat c_R}{k}\qquad \forall\,(y_1,y_2)\in \mathcal U_k.
$$
Recalling \eqref{eq:sum k}, we conclude that
$$
1\geq  \frac{\pi}{2}\lambda_1 \hat c_R\sum_{k\geq 1}\frac{1}{k}=+\infty,
$$
a contradiction.

This proves that $K=\partial\varphi(0)$ must be a point,
and recalling \cite[Lemmata A.21 and A.24]{figalliBook} we obtain that both $\nabla\varphi$ and $\nabla \varphi^*$ are continuous, thus
${\bf Q}_{\tiny{\pm}}:B_1\to \R^2$ is a homeomorphism, as desired.
\end{proof}

\bigskip
{\it Acknowledgments:} The author is extremely grateful to Marc Hallin, both for proposing this problem to him and for several stimulating discussions during the preparation of this manuscript.
The author is thankful to Connor Mooney for useful comments on a preliminary version of the manuscript.
The author is supported by the ERC Grant ``Regularity and Stability in Partial Differential Equations (RSPDE)''

\end{document}